\newtheorem{thm}{Theorem}[section]
\newtheorem{lem}[thm]{Lemma}
\theoremstyle{definition}
\newtheorem*{rem}{Remark}
\def\fph{\mathbb{F}_{\ph}}
\newcommand{\Z}{\mathbb Z}
\newcommand{\z}{\mathbb Z}
\newcommand{\Q}{\mathbb Q}
\newcommand{\F}{\mathbb F}
\newcommand{\fp}{\mathbb F_p}
\def\F{\mathbb{F}}
\newcommand{\p}{\mathfrak{p}}
\def\ol{\overline}
\def\al{\alpha}
\def\la{\lambda}
\def\th{\theta}
\def\md#1{\ \mbox{\rm(mod }{#1})}
\def\nph#1{N_{\ph}(#1)}
\def\npp#1{N_{\ph}^+(#1)}
\def\ph{\phi}
\newcounter{cs}
\newcommand{\casos}{\begin{itemize}}
\newcommand{\fcasos}{\end{itemize}\setcounter{cs}{1}}
\newfont{\tit}{cmr12 scaled \magstep3}
\begin{document}

\title[]{On Power integral bases for certain pure number fields}
\author{Lhoussain El Fadil}
\address{Faculty of Sciences Dhar El Mahraz, P.O. Box  1874 Atlas-Fes , Sidi mohamed ben Abdellah University,  Morocco}\email{lhouelfadil2@gmail.com}
\keywords{ Power integral basis, index, Theorem of Ore, prime ideal factorization} \subjclass[2010]{11R04,
11R16, 11R21}
\date{}
\begin {abstract}
Let {$K$} be a  pure number field generated by  a complex root  of a monic irreducible polynomial $f(x)=x^{12}-m$ with  is a square free rational integer {$m\neq\mp 1$}. In this paper,   we prove that if  $m \equiv 2 \mbox{ or } 3\md4$ and $m\not\equiv \mp 1\md9$,  then the number field $K$  is monogenic.  But if {$m \equiv 1\md4$} or  $m\equiv  \mp 1\md9$,  then the number  field $K$ is not monogenic. 
\end{abstract}
\maketitle
 \section{Introduction}
  Let $K$ be a number field generated by $\al$ a complex root of a monic irreducible polynomial $f(x)\in
  \Z[x]$. { We denote its ring of integers by $\Z_K$.
  It is well know that  the ring $\Z_K$ is a free $\Z$-module of rank $n=[K:\Q]$. Let  $(\Z_K:\Z[\al])$ be the index of $\Z[\al]$ in $\Z_K$.   
  For any rational prime $p$, if $p$ does not divide the index $(\Z_K:\Z[\al])$, then thanks to a well-known { theorem of Dedekind},  the factorization of the ideal  $p \Z_K$ can be directly derived from the factorization of ${\overline f(x)}$ over $\F_p$, where ${\overline f(x)}$ is the reduction of $f(x)$ modulo $p$. { If $(\Z_K:\Z[\th])=1$ for  some $\th\in\Z_K$, i.e. if this index is not divisible by any  rational prime $p$, then  $(1,\th,\cdots,\th^{n-1})$ is a power integral bases of $\Z_K$. In this case,  the number field $K$ is said to
be monogenic and not monogenic otherwise.
The problem of testing the monogeneity of number fields and constructing power integral bases
have been intensively studied this last century, mainly by Ga\'al, Gy\"ory, Nakahara, Peth\"o,
Pohst and their research groups (see for instance \cite{AN, G, 2a, 10a, G19, 13a, GO, 17a, P}).
It is called a problem of Hasse to give an arithmetic characterization of those number fields which have a power integral basis \cite{10a, F4, G19, Ha, He, MNS, P}}.
{ In \cite{E07},   El Fadil  gave conditions for the existence of power integral bases of pure cubic fields in terms of the index form equation. In \cite{F4}, Funakura, studied the integral basis in pure quartic fields.  In \cite{GR4},  Ga\'al and  Remete, calculated the elements of index $1$, with coefficients with absolute value $<10^{1000}$ in the integral basis, of pure quartic fields generated by $m^{\frac{1}{4}}$ for {$1< m <10^7$}  and $m\equiv 2,3 \md4$.  In \cite{AN6}, Ahmad, Nakahara, and Husnine proved  that  if $m\equiv 2,3 \md4$ and  $m\not\equiv \mp1\md9$, then the sextic number field generated by $m^{\frac{1}{6}}$ is monogenic.
They also showed in \cite{AN},    that if $m\equiv 1 \md4$ and $m\not\equiv \mp1\md9$, then the sextic number field generated by $m^{\frac{1}{6}}$ is not monogenic.  Also, in \cite{E6}, based on prime ideal factorization, El Fadil showed that  if $m\equiv 1 \md4$ or $m\not\equiv 1\md9$, then the sextic number field generated by $m^{\frac{1}{6}}$ is not monogenic.
{Also, Hameed and Nakahara \cite{HN8}, proved that if $m\equiv 1\md{16}$, then the
octic number field generated by $m^{1/8}$ is not monogenic, but if $m\equiv 2,3 \md4$, 
then it is monogenic.}   In \cite{GR17}, Ga\'al  and  Remete, by applying the  explicit form of the index, they obtained new results on  monogeneity of the number  fields generated  by $m^{\frac{1}{n}}$, where $3\le n\le 9$.  While Ga\'al's and { Remete's} techniques are based on the index calculation,  Nakahara's methods are based on the existence of power relative integral bases of some special sub-fields. The goal of this paper is to study the monogeneity of pure number fields  defined by  $x^{12}-m$, where $m\neq 1$ is a square free integer. Our method is based on prime ideal factorization and is similar to that used in \cite{E6,  E24}}. 
 \section{Main results}
  { Our  main theorems below give a precise test for  { the
monogeneity of the number field $K=\Q(\al)$}, where $\al$ is a complex root
of an irreducible polynomial $f(x)=x^{12}- m \in \Z[x]$ with a square-free rational
integer $m\neq \mp 1$}.
\begin{thm}\label{pib}
Under the above hypothesis, if $m\equiv 2 \mbox{ or } 3 \md{4}$ and $m\not\equiv \mp 1\md9$, then {{ $\Z[\al]$ }} is the ring of integers of
$K$.
\end{thm}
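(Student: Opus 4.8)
The plan is to show that no rational prime $p$ divides the index $(\Z_K:\Z[\al])$, which is equivalent to $\Z[\al]=\Z_K$. Since $\mathrm{disc}(f)=\pm\,12^{12}m^{11}=\pm\,2^{24}\,3^{12}\,m^{11}$ and $\mathrm{disc}(f)=(\Z_K:\Z[\al])^2\,d_K$, the only candidates are $p=2$, $p=3$ and the prime divisors of $m$. If $p\mid m$ then, $m$ being square-free, $v_p(m)=1$ and $f(x)=x^{12}-m$ is $p$-Eisenstein, so $p$ is totally ramified in $K$ and $p\nmid(\Z_K:\Z[\al])$; this already disposes of $p=2$ when $m\equiv 2\md4$ and of $p=3$ when $3\mid m$. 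There remain two genuine cases: $p=2$ under the hypothesis $m\equiv 3\md4$ (i.e. $m$ odd), and $p=3$ under the hypothesis $3\nmid m$ with $m\not\equiv\mp1\md9$. For each I will apply Ore's theorem: if, for every monic lift $\ph$ of an irreducible factor of $\ol f$ over $\fp$, the principal $\ph$-Newton polygon of $f$ is one-sided and its attached residual polynomial is separable, then $f$ is $p$-regular and $p\nmid(\Z_K:\Z[\al])$. In every case below each polygon turns out to be a single side joining $(0,1)$ to $(e,0)$, of slope $-1/e$, so the residual polynomial has degree $1$ and is automatically separable; thus no second-order (higher Montes) analysis will be needed.

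For $p=2$ with $m$ odd, over $\F_2$ we have $\ol f=x^{12}-1=(x+1)^4(x^2+x+1)^4$, so I must treat $\ph_1=x+1$ and $\ph_2=x^2+x+1$. For $\ph_1$, expanding $f$ in powers of $x+1$ gives the coefficient of $(x+1)^k$ equal to $(-1)^k\binom{12}{k}$ for $k\ge1$ and $1-m$ for $k=0$; using $v_2(1-m)=1$ (this is exactly where $m\equiv 3\md4$ is used) together with $v_2\binom{12}{1}=2$, $v_2\binom{12}{2}=1$, $v_2\binom{12}{3}=2$, $v_2\binom{12}{4}=0$, the principal polygon is the single segment from $(0,1)$ to $(4,0)$ of slope $-1/4$, contributing one prime with $e=4$, $f=1$. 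For $\ph_2$ I use $x^3=1+(x-1)\ph_2$ to write $f=(1-m)+4(x-1)\ph_2+6(x-1)^2\ph_2^2+4(x-1)^3\ph_2^3+(x-1)^4\ph_2^4$, then reduce $(x-1)^2\equiv\ph_2-3x$, $(x-1)^3\equiv(x-4)\ph_2+(6x+3)$, $(x-1)^4\equiv\ph_2^2+(9-6x)\ph_2-9(x+1)$ and carry; the resulting $\ph_2$-adic coefficients are $a_0=1-m$, $a_1=4x-4$, $a_2=-18x$, $a_3=24x+18$, $a_4=-5x-25$, $a_5=9-6x$, $a_6=1$, whose $2$-adic valuations $1,2,1,1,0,0,0$ again give the single segment $(0,1)$--$(4,0)$ of slope $-1/4$, contributing one prime with $e=4$, $f=2$. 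Since $4\cdot1+4\cdot2=12$ the factorization is exhausted, $f$ is $2$-regular, and $2\nmid(\Z_K:\Z[\al])$.

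For $p=3$ with $3\nmid m$ I split on $m\bmod3$. If $m\equiv1\md3$ then over $\F_3$ one has $\ol f=(x^4-1)^3=(x-1)^3(x+1)^3(x^2+1)^3$. For $\ph\in\{x-1,x+1\}$, expanding $f$ in powers of $\ph$ gives constant term $1-m$ with $v_3(1-m)=1$ (here $m\not\equiv1\md9$ is used), $v_3\binom{12}{1}=v_3\binom{12}{2}=1$, $v_3\binom{12}{3}=0$, hence the segment $(0,1)$--$(3,0)$ of slope $-1/3$, giving one prime each with $e=3$, $f=1$. For $\ph_3=x^2+1$, since $x^2\equiv-1\md{\ph_3}$ one gets the (already reduced) expansion $f=(1-\ph_3)^6-m=\sum_{k\ge1}(-1)^k\binom{6}{k}\ph_3^k+(1-m)$, whose coefficients have valuations $1,1,1,0,\dots$, again the segment $(0,1)$--$(3,0)$ of slope $-1/3$, one prime with $e=3$, $f=2$; and $3\cdot1+3\cdot1+3\cdot2=12$. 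If $m\equiv2\md3$ then over $\F_3$ one has $\ol f=(x^4+1)^3=(x^2+x+2)^3(x^2+2x+2)^3$. For $\ph=x^2+x+2$ a short computation gives $x^{12}\equiv-45x-46\md{\ph}$, so the constant term of the $\ph$-adic expansion is $-45x-(46+m)$ with $v_3=\min\{2,v_3(46+m)\}=1$ because $46+m\equiv1+m\md9$ and $m\not\equiv-1\md9$; since $\ol\ph{}^{\,3}\mid\ol f$ the next two $\ph$-adic coefficients are $\equiv0\md3$ and the one of index $3$ is a $3$-adic unit, so the principal polygon is once more the single segment $(0,1)$--$(3,0)$ of slope $-1/3$, and the same holds for $x^2+2x+2$ by the symmetry $x\mapsto-x$; thus $f$ is $3$-regular and $3\nmid(\Z_K:\Z[\al])$.

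Putting the three cases together, no rational prime divides $(\Z_K:\Z[\al])$, so $\Z[\al]=\Z_K$. The routine but delicate point — the part I expect to be the main obstacle in writing out a clean proof — is the computation of the $\ph$-adic expansions for the quadratic lifts $x^2+x+1$ (at $p=2$) and $x^2\pm x+2$ (at $p=3$), and, above all, verifying that the $p$-adic valuation of the constant term of each such expansion is \emph{exactly} $1$: this is precisely what the congruences $m\equiv 3\md4$ and $m\not\equiv\mp1\md9$ guarantee, and it is what forces every Newton polygon to be a single side of slope $-1/e$, so that Ore's first-order criterion applies directly.
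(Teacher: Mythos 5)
Your proof is correct and follows essentially the same route as the paper: reduce via the discriminant $\pm 12^{12}m^{11}$ to the primes dividing $2\cdot 3\cdot m$, then verify with Ore's theorem that every principal $\ph$-Newton polygon is a single side of height $1$, hence of degree $1$ with irreducible residual polynomial. The only cosmetic differences are your choice of lifts $x^2+x+2$ and $x^2+2x+2$ at $p=3$ for $m\equiv -1\md 3$ (the paper uses $x^2+x-1$ and $x^2-x-1$, whose constant terms $\mp 144x+89-m$ play the role of your $-45x-(46+m)$) and your shortcut of bounding the intermediate coefficients via $\ol{\ph}^{\,3}$ exactly dividing $\ol{f}$ rather than writing out the full $\ph$-expansion.
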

\begin{thm}\label{npib}
Under the above hypothesis, if { $m\equiv   1\md4$} or $m\equiv  \mp 1\md9$, then $K$ is not monogenic.  
\end{thm}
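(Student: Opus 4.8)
\emph{Strategy.} I will prove Theorem~\ref{npib} by exhibiting, under each hypothesis, a rational prime $p$ which is a \emph{common index divisor} of $K$, i.e. $p\mid(\Z_K:\Z[\th])$ for every $\th\in\Z_K$; the existence of such a $p$ obviously forbids monogeneity. The tool is the classical counting criterion: if $p\Z_K=\prod_i\p_i^{e_i}$ with residue degrees $f_i=f(\p_i/p)$ and, for some $f\ge1$, the number of $i$ with $f_i=f$ exceeds the number $N_p(f)$ of monic irreducible polynomials of degree $f$ over $\F_p$, then $p$ is a common index divisor. Since $N_2(1)=2$, $N_2(2)=1$, $N_3(1)=N_3(2)=3$, it will be enough to produce three primes of residue degree $1$ (or two primes of residue degree $2$) above $2$ when $m\equiv1\md4$, and four primes of residue degree $1$ (or four of residue degree $2$) above $3$ when $m\equiv\mp1\md9$. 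The splitting of $p\in\{2,3\}$ in $\Z_K$ is obtained by applying Newton polygons and the theorem of Ore to $f(x)=x^{12}-m$.

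\emph{The case $m\equiv1\md4$.} Here $m$ is odd, so $\overline f(x)=x^{12}-1=(x+1)^4(x^2+x+1)^4$ in $\F_2[x]$, and I run the $\phi$-Newton polygon of $f$ for $\phi_1=x+1$ and $\phi_2=x^2+x+1$. For $\phi_1$ the $\phi_1$-adic digits are $1-m$ and $\binom{12}{k}(-1)^{k}$ ($k\ge1$), with $2$-adic valuations $v_2(1-m),2,1,2,0,\dots$; for $\phi_2$ one first uses $x^{3}=1+(x-1)\phi_2$ to write $f=(1-m)+4(x-1)\phi_2+6(x-1)^2\phi_2^2+4(x-1)^3\phi_2^3+(x-1)^4\phi_2^4$ and then reduces the digits via $(x-1)^2=\phi_2-3x$, obtaining $\phi_2$-adic digits of valuations $v_2(1-m),2,1,1,0,0,0$. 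One then splits according to $v_2(1-m)$: if $v_2(1-m)=2$ (i.e. $m\equiv5\md8$), both polygons consist of one side of slope $-1/2$ and length $4$, with residual polynomial $y^2+y+1$ over $\F_2$ (irreducible) for $\phi_1$ and a reducible quadratic over $\F_4$ for $\phi_2$, so $2\Z_K$ acquires three prime factors of residue degree $2$; if $v_2(1-m)=3$ (i.e. $m\equiv9\md{16}$), $\phi_1$ gives one prime of residue degree $2$ and one of residue degree $1$, while $\phi_2$ (via a reducible quadratic residual polynomial on its slope $-1$ side) gives three primes of residue degree $2$; if $v_2(1-m)\ge4$ (i.e. $m\equiv1\md{16}$), the $\phi_1$-polygon already splits into three sides of lengths $1,1,2$ and slopes $-(v_2(1-m)-2),-1,-1/2$, all with linear residual polynomials, yielding three primes of residue degree $1$. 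In every subcase the count exceeds $N_2(1)$ or $N_2(2)$, so $2$ is a common index divisor and $K$ is not monogenic.

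\emph{The case $m\equiv\mp1\md9$.} Then $3\nmid m$. If $m\equiv1\md3$ then $\overline f(x)=(x-1)^3(x+1)^3(x^2+1)^3$ in $\F_3[x]$; for $\phi\in\{x-1,x+1\}$ the $\phi$-adic digits are $1-m$ and $\binom{12}{k}(\pm1)^{k}$, with $3$-adic valuations $v_3(1-m)\ge2,1,1,0$, so the principal $\phi$-polygon consists of a side of slope $-1$ and length $1$ and a side of slope $-1/2$ and length $2$, each with linear residual polynomial; hence each of $x-1$ and $x+1$ contributes two primes of residue degree $1$, giving at least four above $3$. If $m\equiv2\md3$ then $\overline f(x)=(x^2+x+2)^3(x^2-x+2)^3$; setting $\psi=(x^2+x+2)(x^2-x+2)=x^4+3x^2+4$ one computes $x^{12}-m=(45x^2+44-m)+(-36x^2-87)\psi+(-9x^2+15)\psi^2+\psi^3$, and then, using $\psi=\phi_1^2-2x\phi_1$ with $\phi_1=x^2+x+2$ (and the symmetric identity with $\phi_2=x^2-x+2$), one finds the $\phi_i$-polygon to consist again of a side of slope $-1$ and length $1$ and a side of slope $-1/2$ and length $2$; since here $\deg\overline{\phi_i}=2$, each $\phi_i$ contributes two primes of residue degree $2$, giving at least four above $3$. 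In either case the count $4$ exceeds $N_3(1)=N_3(2)=3$, so $3$ is a common index divisor and $K$ is not monogenic.

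\emph{The main difficulty} is twofold. First, for the quadratic key polynomials ($x^2+x+1$ when $p=2$, and $x^2\pm x+2$ when $p=3$) the $\phi$-adic expansion of $f$ is not a plain binomial expansion: it requires the reduction identities above, and tracking the $p$-adic valuations of the resulting digits is the computational heart of the argument. Second, in the borderline subcases ($m\equiv9\md{16}$ for $p=2$, and $m\equiv\mp1\md9$ for $p=3$) the conclusion hinges on whether a degree-$2$ residual polynomial over $\F_4$, resp. $\F_9$, factors, because it is precisely this factorization that produces the extra prime ideal forcing $p$ to be a common index divisor; settling it amounts to an Artin--Schreier/trace computation over $\F_4$, resp. a quadratic-residue computation over $\F_9$. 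One must also check in each case that $f$ is $p$-regular (all residual polynomials separable), so that Ore's theorem applies directly and no higher iteration of the Montes algorithm is needed.
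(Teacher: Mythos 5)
Your proposal is correct and follows essentially the same route as the paper: factor $2\Z_K$ (resp.\ $3\Z_K$) via $\ph$-Newton polygons and Ore's theorem in each congruence subcase, then invoke the common-index-divisor counting criterion ($P_f>N_f$ with $N_2(2)=1$, $N_3(1)=N_3(2)=3$) to conclude that $2$ (resp.\ $3$) divides $(\Z_K:\Z[\th])$ for every generator $\th$. The only differences are cosmetic — you use the lifts $x^2\pm x+2$ instead of the paper's $x^2\pm x-1$ for $p=3$, $m\equiv-1\md9$, and an intermediate expansion in $\psi=\ph_1\ph_2$ — and your residue-degree counts agree with the paper's (indeed your $f_{121}=1$ in the $m\equiv9\md{16}$ subcase matches the paper's proof rather than the typo in its Lemma statement).
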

\section{{Preliminaries}}
In order to {  prove}  Theorem \ref{pib} and  Theorem \ref{npib},  we recall some fundamental notions on  Newton polygon techniques. Namely, theorem of index and prime ideal factorization. 
 {{ Let  $\ol{f(x)}=\prod_{i=1}^r \ol{\ph_i(x)}^{l_i}$ modulo $p$ be the factorization of $\ol{f(x)}$ into powers of monic irreducible coprime polynomials of $\F_p[x]$. Recall that  Kummer showed that if $l_i=1$ for every $i=1,\dots,r$, then the factorization of $p\Z_K$ derived directely from the factorization of $\ol{f(x)}$. Namely,  $$p\Z_K=\prod_{i=1}^r {\p_i},\mbox{  where } \p_i=(p,\ph_i(\al)), \mbox{ for every }i=1,\dots,r.$$ Again a theorem due to  Dedekind says that: 
 \begin{thm}$($\cite[ Chapter I, Proposition 8.3]{Neu}$)$\\
 $$\mbox{If }   p \mbox{  does not divide the index } (\Z_K:\Z[\al]), \mbox{ then } p\Z_K=\prod_{i=1}^r \p_i^{l_i}, \mbox{ where every } \p_i=p\Z_K+\phi_i(\al)\Z_K$$  and the residue degree of $\p_i$ is $f(\p_i)={\mbox{deg}}(\phi_i)$.
 \end{thm}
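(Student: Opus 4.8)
The plan is to translate the whole statement into a computation inside the finite ring $\Z_K/p\Z_K$.

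\textbf{Step 1: reduce the index hypothesis to an isomorphism mod $p$.} The cokernel $C:=\Z_K/\Z[\al]$ is a finite abelian group of order $(\Z_K:\Z[\al])$, which is coprime to $p$ by hypothesis; hence multiplication by $p$ is an automorphism of $C$, so $C/pC=0$ and $\mathrm{Tor}_1^{\Z}(C,\Z/p\Z)=0$. Applying $-\otimes_{\Z}\Z/p\Z$ to the short exact sequence $0\to\Z[\al]\to\Z_K\to C\to 0$ then yields a ring isomorphism $\Z[\al]/p\Z[\al]\xrightarrow{\ \sim\ }\Z_K/p\Z_K$. Since $f$ is monic with $\al$ as a root, the evaluation map gives $\Z[\al]\cong\Z[x]/(f(x))$, and therefore
$$\Z_K/p\Z_K\;\cong\;\F_p[x]/\bigl(\ol{f(x)}\bigr)\;\cong\;\prod_{i=1}^{r}\F_p[x]/\bigl(\ol{\ph_i(x)}^{\,l_i}\bigr),$$
the last isomorphism being the Chinese Remainder Theorem applied to the given factorization of $\ol{f(x)}$ into pairwise coprime prime powers.

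\textbf{Step 2: read off the decomposition of $p\Z_K$.} Each factor $A_i:=\F_p[x]/(\ol{\ph_i(x)}^{\,l_i})$ is a local Artinian $\F_p$-algebra whose maximal ideal is generated by the class of $\ol{\ph_i(x)}$, whose residue field is $\F_p[x]/(\ol{\ph_i(x)})$ of degree $d_i:=\deg\ph_i$ over $\F_p$, and whose $\F_p$-dimension is $l_id_i$. On the other hand, writing the (a priori unknown) prime factorization as $p\Z_K=\prod_{j=1}^{s}\mathfrak q_j^{e_j}$, the Chinese Remainder Theorem in the Dedekind domain $\Z_K$ gives $\Z_K/p\Z_K\cong\prod_{j=1}^{s}\Z_K/\mathfrak q_j^{e_j}$, and each $\Z_K/\mathfrak q_j^{e_j}$ is local Artinian with residue field $\Z_K/\mathfrak q_j$ of degree $f(\mathfrak q_j)$ and $\F_p$-dimension $e_jf(\mathfrak q_j)$. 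A finite commutative ring decomposes uniquely (up to reordering and isomorphism) as a product of local rings, namely its localizations at the maximal ideals; so matching the two product decompositions of $\Z_K/p\Z_K$ forces $s=r$ and, after renumbering, $\Z_K/\mathfrak q_i^{e_i}\cong A_i$. Comparing residue fields gives $f(\mathfrak q_i)=d_i=\deg\ph_i$, and comparing $\F_p$-dimensions then gives $e_i=l_i$.

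\textbf{Step 3: identify $\mathfrak q_i$ and conclude.} Under the isomorphism $\Z_K/p\Z_K\cong\F_p[x]/(\ol{f(x)})$, the prime $\mathfrak q_i$ corresponds to the maximal ideal generated by the class of $\ol{\ph_i(x)}$ — this is exactly the maximal ideal of the $i$-th local factor, pulled back through the CRT isomorphism. Pulling back to $\Z_K$ and using that $\al$ maps to $x$, this says $\mathfrak q_i=p\Z_K+\ph_i(\al)\Z_K$; in particular these ideals are pairwise distinct. Combining Steps 2 and 3 yields $p\Z_K=\prod_{i=1}^{r}\bigl(p\Z_K+\ph_i(\al)\Z_K\bigr)^{l_i}$ with residue degree $\deg\ph_i$, which is the assertion. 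The step I expect to be the real crux is the passage in Step 2 from an \emph{abstract} ring isomorphism $\Z_K/p\Z_K\cong\prod_iA_i$ to the \emph{precise exponents} $e_i=l_i$: this is where one must invoke uniqueness of the local decomposition of an Artinian ring (equivalently, combine the dimension count $\sum_il_id_i=\deg f=n=\sum_ie_if(\mathfrak q_i)$ with the surjections $\Z_K/p\Z_K\twoheadrightarrow\Z_K/\mathfrak q_i^{l_i}$), and where the ramified case $l_i>1$, in which the factors $A_i$ are non-reduced, must be handled carefully. Everything else is the Chinese Remainder Theorem together with elementary bookkeeping about lengths, residue fields and $\F_p$-dimensions.
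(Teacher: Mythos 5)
Your argument is correct and complete: the paper itself offers no proof of this statement (it is quoted from Neukirch, Ch.~I, Prop.~8.3), and what you have written is precisely the standard argument given there — the index hypothesis makes $\Z[\al]/p\Z[\al]\to\Z_K/p\Z_K$ an isomorphism, the Chinese Remainder Theorem identifies both sides with products of local Artinian $\F_p$-algebras, and matching residue fields and $\F_p$-dimensions (using $\dim_{\F_p}\Z_K/\mathfrak q^e=e\,f(\mathfrak q)$) pins down the exponents and residue degrees. You correctly flag the only delicate point, the uniqueness of the local decomposition needed to turn the abstract isomorphism into the equalities $e_i=l_i$ and $f(\p_i)=\deg\ph_i$; no gap remains.
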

 In order to apply this  theorem in an effective way one needs a
criterion to test  whether  $p$ divides  or not the index $(\Z_K:\Z[\al])$.}
  In $1878$, Dedekind  gave a criterion  to tests whether  $p$ divides or not  $(\Z_K: \Z[\al])$.
 {
  \begin{thm}\label{Ded}$($Dedekind's Criterion \cite[Theorem 6.1.4]{Co} and \cite{R}$)$\\
 For a number field $K$ generated by $\al$ a complex root of a monic irreducible  polynomial $f(x)\in \Z[x]$ and a rational prime integer $p$, let $\overline{f}(x)=\prod_{i=1}^r\overline{\ph_i}^{l_i}(x)\md{p}$  be the factorization of   $\overline{f}(x)$ in $\F_p[x]$, where the polynomials $\ph_i\in\Z[x]$ are monic with their reductions irreducible over $\F_p$ and GCD$(\overline{\ph_i},\overline{\ph_j})=1$ for every $i\neq j$. If we set
$M(x)=\cfrac{f(x)-\prod_{i=1}^r{\ph_i}^{l_i}(x)}{p}$, then $M(x)\in \Z[x]$ and the following statements are equivalent:
\begin{enumerate}
\item[1.]
$p$ does not divide the index $(\Z_K:\Z[\al])$.
\item[2.]
For every $i=1,\dots,r$, either $l_i=1$ or $l_i\ge 2$ and $\overline{\ph_i}(x)$ does not divide $\overline{M}(x)$ in $\F_p[x]$.
\end{enumerate}
\end{thm}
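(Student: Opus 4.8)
The assertion $M(x)\in\Z[x]$ is immediate, since $\ol f=\prod_{i=1}^r\ol{\ph_i}^{\,l_i}$ means that $p$ divides every coefficient of $f-\prod_{i=1}^r\ph_i^{l_i}$. For the equivalence of (1) and (2) the plan is to pass to the question of $p$-maximality of the order $\mathcal O:=\Z[\al]$. Recall that $p$ divides $(\Z_K:\mathcal O)$ if and only if $\mathcal O$ is not $p$-maximal, and that, by the standard radical (``round two'') criterion, $\mathcal O$ is $p$-maximal if and only if the multiplier ring $(I:I):=\{x\in K:\,xI\subseteq I\}$ of the $p$-radical $I:=\sqrt{p\mathcal O}$ equals $\mathcal O$ itself. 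Thus everything reduces to an explicit determination of $(I:I)$ in the monogenic order $\mathcal O=\Z[\al]$, which I would carry out as follows.

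First I would identify $I$. Since $\mathcal O/p\mathcal O\cong\F_p[x]/(\ol f)$ and $\ol f=\prod_i\ol{\ph_i}^{\,l_i}$, the nilradical of $\mathcal O/p\mathcal O$ is generated by the image of $g(x):=\prod_{i=1}^r\ph_i(x)$, the radical of $\ol f$; hence $I=p\mathcal O+g(\al)\mathcal O$. It is convenient to put also $h(x):=\prod_{i=1}^r\ph_i(x)^{l_i-1}$, so that $g(x)h(x)=\prod_i\ph_i(x)^{l_i}$ and therefore $f=gh+pM$ with precisely the $M$ of the statement. Since $\ol{\ph_i}\mid\ol g$ for every $i$, whereas $\ol{\ph_i}\mid\ol h$ exactly when $l_i\ge2$, the condition $\gcd(\ol g,\ol h,\ol M)=1$ in $\F_p[x]$ is equivalent to statement (2); so it suffices to prove that $(I:I)=\mathcal O$ if and only if $\gcd(\ol g,\ol h,\ol M)=1$.

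For the computation of $(I:I)$: since $I\supseteq p\mathcal O$, every $x\in(I:I)$ satisfies $px\in\mathcal O$, so $x=a(\al)/p$ with $a\in\Z[x]$ of degree $<n$; and $x\in(I:I)$ iff $px=a(\al)\in I$ and $xg(\al)\in I$. The first condition says $\ol g\mid\ol a$, so $a=gb+pc$ with $b,c\in\Z[x]$, and after subtracting $c(\al)\in\mathcal O$ we are reduced to $x=g(\al)b(\al)/p$, which lies in $\mathcal O$ exactly when $\ol b=0$; note that $\deg\ol b<\deg\ol h$. Using the relation $f=gh+pM$ together with the primitivity of $g$, one translates the remaining condition $xg(\al)=g(\al)^2b(\al)/p\in I$ into the following: there exists $\ol C\in\F_p[x]$ with $\ol g\,\ol b=\ol h\,\ol C$ and $\ol g\mid\ol M\,\ol C$. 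From here the two implications follow. If $\gcd(\ol g,\ol h,\ol M)=1$, then comparing, for each $i$, the multiplicity of $\ol{\ph_i}$ in these two relations, and using $\deg\ol b<\deg\ol h$, forces $\ol b=0$; hence $(I:I)=\mathcal O$. Conversely, if some $\ol{\ph_i}$ with $l_i\ge2$ divides $\ol g$, $\ol h$ and $\ol M$, then choosing $b$ to be any lift of $\ol h/\ol{\ph_i}$ one checks that $x=g(\al)b(\al)/p$ — which is congruent modulo $\mathcal O$ to $\tfrac1p\,\ph_i(\al)^{l_i-1}\prod_{j\ne i}\ph_j(\al)^{l_j}$ — lies in $(I:I)\setminus\mathcal O$ (in particular $x$ is an algebraic integer, being an element of the finitely generated $\Z$-module $(I:I)$, yet $x\notin\mathcal O$); so $\mathcal O$ is not $p$-maximal.

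The steps I expect to be the main obstacle are the explicit description of $(I:I)$ — that is, converting the membership $g(\al)^2b(\al)/p\in I$ into the two divisibility relations through careful use of $f=gh+pM$ — and the ensuing valuation argument by which triviality of $\gcd(\ol g,\ol h,\ol M)$ forces $\ol b=0$. By contrast, the fact that $M\in\Z[x]$, the identification of the $p$-radical with $(p,g(\al))$, and the reformulation of statement (2) as $\gcd(\ol g,\ol h,\ol M)=1$ are routine; and the reduction from statement (1) to the assertion $(I:I)=\mathcal O$ is the standard radical criterion for $p$-maximality, which I would simply quote.
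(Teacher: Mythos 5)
Your proposal is correct. Note, however, that the paper does not prove this statement at all: it is quoted as a known result with references to Cohen \cite[Theorem 6.1.4]{Co} and Dedekind \cite{R}, so there is no in-paper proof to compare against. Your argument --- reducing $p$-maximality of $\Z[\al]$ to the computation of the multiplier ring of the $p$-radical $I=(p,g(\al))$ via the Pohst--Zassenhaus criterion, rewriting condition (2) as $\gcd(\ol g,\ol h,\ol M)=1$ with $g=\prod_i\ph_i$, $h=\prod_i\ph_i^{l_i-1}$, $f=gh+pM$, and then translating $g(\al)^2b(\al)/p\in I$ into the divisibilities $\ol g\,\ol b=\ol h\,\ol C$ and $\ol g\mid\ol M\,\ol C$ --- is precisely the standard proof found in the cited source, and the key steps you flag (the degree bound $\deg\ol b<\deg\ol h$ forcing $\ol b=0$ in one direction, and the explicit element $\tfrac1p\,\ph_i(\al)^{l_i-1}\prod_{j\ne i}\ph_j(\al)^{l_j}$ in the other) are exactly the right ones and check out.
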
 }
 When Dedekind's criterion fails, that is,  $p$ divides the index $(\z_K:\z[\alpha])$ for all primitive elements $\al$ of $\z_K$ (see for example \cite{B2}),{  then} for such primes and number fields, it is not possible to obtain the prime ideal factorization of $p\z_K$ by {Dedekind's theorem}.
In 1928, Ore developed   an alternative approach
for obtaining the index $(\z_K:\z[\alpha])$, the
absolute discriminant, and the prime ideal factorization of the rational primes in
a number field $K$ by using Newton polygons (see {\cite{EMN, MN, O}}). For more details on Newton polygon techniques, we refer to \cite{El, GMN}}.
 For any  prime integer  $p$,{ let $\nu_p$ be the $p$-adic valuation of $\Q$, $\Q_p$ its $p$-adic completion, and $\Z_p$ the ring of $p$-adic integers. Let  also $\nu_p$ the Gauss's extension of $\nu_p$ to $\Q_p(x)$; $\nu_p(P)=\mbox{min}(\nu_p(a_i), \, i=0,\dots,n)$ for any polynomial $P=\sum_{i=0}^na_ix^i\in\Q_p[x]$ and extended by $\nu_p(P/Q)=\nu_p(P)-\nu_p(Q)$ for every nonzero polynomials $P$ and $Q$ of $\Q_p[x]$.} Let 
$\phi\in\z_p[x]$ be a   monic polynomial  {\it whose reduction} is irreducible  in
$\fp[x]$, let $\fph$ be 
the field $\frac{\fp[x]}{(\overline{\phi})}$. For any
monic polynomial  $f(x)\in \z_p[x]$, upon  the Euclidean division
 by successive powers of $\ph$, we  expand $f(x)$ as follows:
$$f(x)=\sum_{i=0}^la_i(x)\phi(x)^{i},$$ called    the $\phi$-expansion of $f(x)$
 (for every $i$, $deg(a_i(x))<
deg(\phi)$). 
The $\ph$-Newton polygon of $f(x)$ with respect to $p$, is the lower boundary convex envelope of the set of points $\{(i,\nu_p(a_i(x))),\, a_i(x)\neq 0\}$ in the Euclidean plane, which we denote by $\nph{f}$. For every $i\ne j=0,\dots,l$, let  $a_i=a_i(x)$ and $\mu_{ij}=\frac{\nu_p(a_{i})-\nu_p(a_{j})}{i-j}\in \Q$.  Then we obtain the following integers $0=i_0<i_1<\dots< i_r=l$ satisfying   
{$i_{j+1}=\mbox{ max }\{i=i_j+1,\dots l,\, \mu_{i_ji_{j+1}} \le \mu_{i_ji} \mbox{ and } \mu_{i_ji_{j+1}} \mbox{ is minimal}\}$}.  For every $j=1,\dots r$, let {$S_j$ be the segment joining the points $A_{j-1}=(i_{j-1}, \nu_p(a_{i_{j-1}}))$  and $A_j=(i_{j}, \nu_p(a_{i_{j}}))$} in the Euclidean plane. The segments $S_1,\dots,S_r$ are called the sides of the polygon $\nph{f}$.
For every $j=1,\dots,r$,
 the rational  number $\la_j=\frac{\nu_p(a_{i_{j}})-\nu_p(a_{i_{j-1}})}{i_{j}-i_{j-1}}$ is called the slope of $S_j$,
  $l(S_j)=i_{j}-i_{j-1}$ is its length,  
  and $h(S_j)=-\la_jl(S_j)$ is its height.{ In such a way    $l(S_j)$ is 
 the length of its projection to the $x$-axis,  $h(S_j)$  is the length of its projection to the $y$-axis, and $\nu_p(a_{i_{j}})=\nu_p(a_{i_{j-1}})+l(S_{j})\la_{j}$. }
  Remark that the $\ph$-Newton polygon of $f$, is the process of joining the segments $S_1,\dots,S_r$ ordered by   increasing slopes, which  can be expressed as $\nph{f}=S_1+\dots + S_r$.   
 For every side $S$ of the polygon $\npp{f}$ of length  $l(S)$ and height $h(S)$, {let $d(S)=$GCD$(l(S), h(S))$ be the ramification degree of $S$.
 The principal $\ph$-Newton polygon of ${f}$},
 denoted $\npp{f}$, is the part of the  polygon $\nph{f}$, which is  determined by joining all sides of negative  slopes.
  For every side $S$ of{$\npp{f}$}, with initial point $(s, u_s)$ and length $l$, and for every 
$0\le i\le l$, we attach   the following
{{\ residual coefficient} $c_i\in\fph$ as follows:
$$c_{i}=
\left
\{\begin{array}{ll} 0,& \mbox{ if } (s+i,{\it u_{s+i}}) \mbox{ lies strictly
above } S\\
\left(\dfrac{a_{s+i}(x)}{p^{{\it u_{s+i}}}}\right)
\,\,
\md{(p,\phi(x))},&\mbox{ if }(s+i,{\it u_{s+i}}) \mbox{ lies on }S.
\end{array}
\right.$$
where $(p,\phi(x))$ is the maximal ideal of $\z_p[x]$ generated by $p$ and $\ph$. 
Let $\la=-h/e$ be the slope of $S$, where  $h$ and $e$ are two positive coprime integers. Then  $d=l/e$ is the degree of $S$.  Notice that, 
the points  with integer coordinates lying{ on} $S$ are exactly $${(s,u_s),(s+e,u_{s}-h),\cdots, (s+de,u_{s}-dh)}$$. Thus, if $i$ is not a multiple of $e$, then 
$(s+i, u_{s+i})$ does not lie in $S$, and so $c_i=0$. Let
{$$f_S(y)=t_dy^d+t_{d-1}y^{d-1}+\cdots+t_{1}y+t_{0}\in\fph[y],$$}} called  
the residual polynomial of $f(x)$ associated to the side $S$, where for every $i=0,\dots,d$,  $t_i=c_{ie}$.
\begin{rem}
 Notice that as $t_d\neq 0$, deg$(f_S)=d$.\\
 Notice also that if $\nu_p(a_{s}(x))=0$, $\la=0$, and $\ph=x$, then $\fph=\F_p$ and for every $i=0,\dots,l$, $c_i=\overline{{a_{s+i}}} \md{p}$. Thus this notion of residual coefficient generalizes the reduction modulo the  maximal ideal $p$ and $f_S(y)\in\F_p[y]$ coincides with the reduction of $f(x)$ modulo  $(p)$.
    \end{rem}
    Let $\npp{f}=S_1+\dots + S_r$ be the principal $\ph$-Newton polygon of $f$ with respect to $p$.\\
     We say that $f$ is a $\ph$-regular polynomial with respect to $p$, if  $f_{S_i}(y)$ is square free in $\fph[y]$ for every  $i=1,\dots,r$. \\
      The polynomial $f$ is said to be  $p$-regular  if $\overline{f(x)}=\prod_{i=1}^t\overline{\ph_i}^{l_i}$ for some monic polynomials $\ph_1,\dots,\ph_t$ of $\Z[x]$ such that $\ol{\ph_1},\dots,\ol{\ph_t}$ are irreducible coprime polynomials over $\F_p$ and    $f$ is  a $\ph_i$-regular polynomial with respect to $p$ for every $i=1,\dots,t$.
 \smallskip
 
The  theorem of Ore plays  a fundamental key for proving our main Theorems:\\
  Let $\ph\in\Z_p[x]$ be a monic polynomial, with $\overline{\ph(x)}$ is irreducible in $\F_p[x]$. As defined in \cite[Def. 1.3]{EMN},   the $\ph$-index of $f(x)$, denoted by $ind_{\ph}(f)$, is  deg$(\ph)$ times the number of points with natural integer coordinates that lie below or on the polygon $\npp{f}$, strictly above the horizontal axis,{ and strictly beyond the vertical axis} (see $Figure\ 1$).
  
  \begin{figure}[htbp] 
\centering

\begin{tikzpicture}[x=1cm,y=0.5cm]
\draw[latex-latex] (0,6) -- (0,0) -- (10,0) ;

\draw[thick] (0,0) -- (-0.5,0);
\draw[thick] (0,0) -- (0,-0.5);

\node at (0,0) [below left,blue]{\footnotesize $0$};

\draw[thick] plot coordinates{(0,5) (1,3) (5,1) (9,0)};
\draw[thick, only marks, mark=x] plot coordinates{(1,1) (1,2) (1,3) (2,1)(2,2)     (3,1)  (3,2)  (4,1)(5,1)  };

\node at (0.5,4.2) [above  ,blue]{\footnotesize $S_{1}$};
\node at (3,2.2) [above   ,blue]{\footnotesize $S_{2}$};
\node at (7,0.5) [above   ,blue]{\footnotesize $S_{3}$};
\end{tikzpicture}
\caption{    \large  $\npp{f}$.}
\end{figure}

  Now assume that $\overline{f(x)}=\prod_{i=1}^t\overline{\ph_i}^{l_i}$ is the factorization of $\overline{f(x)}$ in $\F_p[x]$, where every $\ph_i\in\Z[x]$ is monic polynomial, with $\overline{\ph_i(x)}$ is irreducible in $\F_p[x]$, $\overline{\ph_i(x)}$ and $\overline{\ph_j(x)}$ are coprime when $i\neq j$ and $i, j=1,\dots,t$.
For every $i=1,\dots,t$, let  $N_{\ph_i}^+(f)=S_{i1}+\dots+S_{ir_i}$ be the principal  $\ph_i$-Newton polygon of $f$ with respect to $p$. For every $j=1,\dots, r_i$,  let $f_{S_{ij}}(y)=\prod_{k=1}^{s_{ij}}\psi_{ijk}^{a_{ijk}}(y)$ be the factorization of $f_{S_{ij}}(y)$ in $\F_{\ph_i}[y]$. 
  Then we have the following index theorem of Ore (see \cite[Theorem 1.7 and Theorem 1.9]{EMN}, \cite[Theorem 3.9]{El}, and{\cite[pp: 323--325]{MN}}).
 \begin{thm}\label{ore} $($Theorem of Ore$)$
 \begin{enumerate}
 \item
  $$\nu_p(ind(f))=\nu_p((\z_K:\z[\al]))\ge \sum_{i=1}^t ind_{\ph_i}(f).$$  The equality holds if $f(x)$ is $p$-regular.  
\item
If  $f(x)$ is $p$-regular
, then
$$p\Z_K=\prod_{i=1}^t\prod_{j=1}^{r_i}
\prod_{k=1}^{s_{ij}}\p^{e_{ij}}_{ijk},$$ 
where{$e_{ij}=l_{ij}/d_{ij}$, $l_{ij}$ is the length of $S_{ij}$,  $d_{ij}$ is the ramification degree}
 of   $S_{ij}$, and $f_{ijk}=\mbox{deg}(\ph_i)\times \mbox{deg}(\psi_{ijk})$ is the residue degree of $\p_{ijk}$ over $p$.
 \end{enumerate}
\end{thm}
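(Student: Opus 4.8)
The plan is to reduce everything to the completion $\Q_p$ and then to treat each $\ph_i$-factor of $f$ separately by a Newton-polygon–adapted Hensel argument. First I would use the standard isomorphism $\Z_K\otimes_{\Z}\Z_p\cong\prod_{\p\mid p}(\Z_K)_{\p}$ together with the fact that each $(\Z_K)_{\p}$ is the valuation ring of the field generated by a root of the corresponding monic irreducible factor of $f$ over $\Q_p$. Since $\mathrm{disc}(f)=(\Z_K:\Z[\al])^2\,\mathrm{disc}(K)$, we have $\nu_p((\Z_K:\Z[\al]))=\tfrac12\bigl(\nu_p(\mathrm{disc}(f))-\nu_p(\mathrm{disc}(K))\bigr)$, so it suffices to recover $\nu_p(\mathrm{disc}(K))$ and the shape of the factorization of $p$ from the $p$-adic factorization of $f$. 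By Hensel's lemma applied to the coprime factorization $\overline{f}=\prod_{i=1}^t\overline{\ph_i}^{l_i}$, one writes $f=\prod_{i=1}^t g_i$ in $\Z_p[x]$ with $\overline{g_i}=\overline{\ph_i}^{l_i}$; this splits both the index and the prime-ideal factorization into independent contributions, so from now on I may assume $\overline{f}=\overline{\ph}^{\,l}$ for a single $\ph$.

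Second, I would prove Ore's theorem of the polygon: if $N_{\ph}^+(f)=S_1+\dots+S_r$, then $f=\prod_{j=1}^r F_j$ in $\Z_p[x]$, where each $F_j$ is monic with $\overline{F_j}=\overline{\ph}^{\,l(S_j)}$ and $N_{\ph}(F_j)=S_j$ is one-sided of slope $\la_j$. The mechanism is a Hensel-type successive approximation in which the ordinary valuation is replaced by the ``$\ph$-adic order'' bookkeeping recorded by the polygon: the vertices of $N_{\ph}^+(f)$ separate the slopes, forcing a genuine splitting in which the factors cannot interact. This reduces the problem to a single one-sided polygon $S$ of slope $-h/e$ with $\gcd(h,e)=1$ and length $l=de$, where $d=\deg(f_S)$.

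Third, for such a one-sided polygon I would prove Ore's theorem of the residual polynomial: factoring $f_S(y)=\prod_{k=1}^{s}\psi_k^{a_k}$ in $\F_{\ph}[y]$ yields a further factorization $F=\prod_{k=1}^s F_k$ in $\Z_p[x]$, where $F_k$ is attached to $\psi_k$, has degree $\deg(\ph)\cdot e\cdot a_k\deg(\psi_k)$, and — when $a_k=1$ — is irreducible over $\Q_p$. Indeed, when $a_k=1$ the Newton polygon and residual polynomial of $F_k$ are ``as trivial as possible'', which is an Eisenstein-like condition forcing irreducibility, ramification index $e$ over $\Q_p$, and residue degree $\deg(\ph)\deg(\psi_k)$. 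Assembling over all $i,j,k$ under the hypothesis that $f$ is $p$-regular (so that every $a_{ijk}=1$) gives exactly the stated factorization $p\Z_K=\prod_{i,j,k}\p_{ijk}^{e_{ij}}$ with the asserted residue degrees, which is part (2).

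Finally, for part (1) I would compare discriminants. On one hand $\nu_p(\mathrm{disc}(f))$ can be read off from the $\ph_i$-Newton polygons (each side contributes through its length, its height, and the discriminant of its residual polynomial); on the other hand, summing the local different exponents of the primes produced above computes $\nu_p(\mathrm{disc}(K))$. Subtracting and halving yields $\nu_p((\Z_K:\Z[\al]))\ge\sum_{i=1}^t \mathrm{ind}_{\ph_i}(f)$, where $\mathrm{ind}_{\ph_i}(f)$ is precisely $\deg(\ph_i)$ times the number of lattice points counted in its definition, with equality exactly when no ``hidden'' ramification escapes this count, i.e.\ when each residual polynomial $f_{S_{ij}}$ is separable. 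The main obstacle is this last bookkeeping together with the lifting arguments of the second and third steps: one must show that the combinatorial data of the polygons and the residual polynomials faithfully records both the factorization and the different, and controlling the gap between the lower bound and $\nu_p(\mathrm{disc}(f))$ is exactly where square-freeness of the $f_{S_{ij}}$ becomes indispensable. For complete details one follows Ore \cite{O}, Montes--Nart \cite{MN}, and El Fadil--Montes--Nart \cite{EMN}.
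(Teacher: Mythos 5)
The paper does not actually prove this theorem: it is recalled as a known result and attributed to Ore, Montes--Nart \cite{MN}, and El Fadil--Montes--Nart \cite{EMN}, so there is no in-paper argument to compare yours against. Your outline is the standard architecture of the proof in those references --- split $f$ over $\Z_p$ by Hensel's lemma according to the coprime factors $\overline{\ph_i}^{l_i}$, then apply the theorem of the polygon (one factor per side $S_{ij}$, with one-sided $\ph_i$-polygon of slope $\la_{ij}$), then the theorem of the residual polynomial (one factor per irreducible $\psi_{ijk}$, irreducible over $\Q_p$ with $e=e_{ij}$ and $f=\deg(\ph_i)\deg(\psi_{ijk})$ when $a_{ijk}=1$), and finally the theorem of the index --- and it is correct as a plan. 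Be aware, though, that it is a plan rather than a proof: the two genuinely hard steps, namely the polygon-adapted lifting that produces the factorization $f=\prod_j F_j$ with prescribed one-sided polygons, and the discriminant/different bookkeeping that turns the lattice-point count into the inequality $\nu_p((\Z_K:\Z[\al]))\ge\sum_i \mathrm{ind}_{\ph_i}(f)$, are named but not carried out, and they constitute essentially all of the content of \cite{MN}. One small imprecision: the theorem only asserts that $p$-regularity is \emph{sufficient} for equality in the index formula, whereas you claim equality holds ``exactly when'' the residual polynomials are separable; the converse is not part of the statement and should not be asserted without proof. As the paper treats this result as imported background, citing \cite{O}, \cite{MN}, \cite{EMN} as you do at the end is the appropriate resolution.
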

\section{{Proofs of main results}}
\begin{proof} of Theorem \ref{pib}.\\
 Since the discriminant of $f(x)$ is $\triangle(f)=\mp 12^{12}m^{11}$,
thanks to the formula linking the discriminant of $K$, the index, and
$\triangle(f)$, in order to prove that $\Z_K=\Z[\al]$ under the hypothesis of Theorem \ref{pib}, we need only to show that $p$
 does not divide the index $(\Z_K:\Z[\al])$  for every 
prime integer  dividing $2\cdot 3\cdot m$. 
Let $p$ be a prime
integer dividing  $2\cdot 3\cdot m$.  
\begin{enumerate}
\item
 $p$ divides  $m$. In this case $\overline{f(x)}=\ph^{12}$ in $\F_p[x]$, where $\ph=x$. As $\nu_p(m)=1$, $\nph{f}=S$ has a single side of height $1$, and so of degree $1$. Thus $f_S(y)$ is irreducible over $\fph$. By the first point of Theorem \ref{ore}, we get $\nu_p(ind(f))=ind_{\ph}(f)=0$; $p$ does not divide $(\Z_K:\Z[\al])$.
 \item
 $p=2$    and  $2$ does not divide $m$. Then $\overline{f(x)}=\ph_1^4\ph_2^4$ is the factorization of $\overline{f(x)}$ in $\F_2[x]$, where $\ph_1=(x-1)$ and $\ph_2=(x^2+x+1)$. 
 By considering $f(x+1)$, let $f(x)=\ph_1^{12}+\dots+495\ph_1^4+220\ph_1^3+66\ph_1^2+12\ph_1+1-m$ be the $\ph_1$-expansion of $f(x)$ and  $f(x)=\ph_2^6+(-6x+9)\ph_2^5-(25+5x)\ph_2^4+(24x+18)\ph_2^3-18x\ph_2^2+(4x-4)\ph_2+(1-m)$  the $\ph_2$-expansion of $f$.  So,  if $m\equiv 3\md4$; $\nu_2(1-m)=1$, then  for every $i=1,2$, $N_{\ph_i}(f)=S_i$ has a single side of degree $1$. Thus for every $i=1,2$,  $f_{S_i}(y)$ is irreducible over $\F_{\ph_i}$,  $ind_{\ph_1}(f)=ind_{\ph_2}(f)=0$. By  of Theorem \ref{ore}$(1)$, we conclude that $\nu_2(ind(f))=0$. That means that $2$ does not divide $(\Z_K:\Z[\al])$.
\item 
$p=3$    and  $3$ does not divide $m$. Then $\overline{f(x)}=(x^4-m)^3$ in $\F_3[x]$.\\
First case, $m=1\md3$. Then  $\overline{f(x)}=\ph_1^3\ph_2^3\ph_3^3$ in $\F_3[x]$, where $\ph_1=x-1$, $\ph_2=x+1$, and $\ph_3=x^2+1$.
Let $f(x)=\ph_1^{12}+\dots+495\ph_1^4+220\ph_1^3+66\ph_1^2+12\ph_1+1-m$ be the $\ph_1$-expansion of $f(x)$, $f(x)=\ph_2^{12}+\dots+495\ph_2^4-220\ph_2^3+66\ph_2^2-12\ph_2+1-m$  the $\ph_2$-expansion of $f(x)$, and $f(x)=\ph_3^{6}-6\ph_3^5+15\ph_3^4-20\ph_3^3+
15\ph_3^2-6\ph_3+1-m$ the $\ph_3$-expansion of $f$.  It follows that  if $m\not\equiv 1\md9$, then $\nu_3(1-m)=1$, and so  for every $i=1,2$, $N_{\ph_i}^+(f)=S_i$ has a single side of degree $1$. Thus for every $i=1,2,3$,  $f_{S_i}(y)$ is irreducible over $\F_{\ph_i}$ and $ind_{\ph_1}(f)=ind_{\ph_2}(f)=ind_{\ph_3}(f)=0$. By of Theorem \ref{ore}$(1)$,  $\nu_3(ind(f))=0$, and so $3$ does not divide $(\Z_K:\Z[\al])$.\\
Second case, $m=-1\md3$. Then  $\overline{f(x)}=\ph_1^3\ph_2^3$ in $\F_3[x]$, where $\ph_1=x^2+x-1$ and $\ph_2=x^2-x-1$.
Let $f(x)=\ph_1^6+(-6x+21)\ph_1^5+(-65x+125)\ph_1^4+(-256x+338)\ph_1^3+(-474x+468)\ph_1^2+(-420x+324)\ph_1+(-144x+89-m)$ be the $\ph_1$-expansion of $f$ and
$f(x)=\ph_2^6+(6x+21)\ph_2^5+(65x+125)\ph_2^4+(256x+338)\ph_2^3+(474x+468)\ph_2^2+(420x+324)\ph_2+(144x+89-m)$ the $\ph_2$-expansion of $f$.   It follows that  if $m\not\equiv -1\md9$, then {$\nu_3(89-m)=1$}. Thus, for every $i=1,2$, $N_{\ph_i}^+(f)=S_i$ has a single side of degree $1$. Thus for every $i=1,2$,  $f_{S_i}(y)$ is irreducible over $\F_{\ph_i}$ and  $ind_{\ph_1}(f)=ind_{\ph_2}(f)=0$. By Theorem \ref{ore},  $\nu_3(ind(f))=0$, and so $3$ does not divide $(\Z_K:\Z[\al])$.
\end{enumerate}
\end{proof}
{   If $p$ does not divide $(\Z_K:\Z[\al])$, then thanks to Dedekind's theorem, the factorization of $p\Z_K$ derived directly from the factorization of $\ol{f(x)}$ in $\F_p[x]$. If $p$  divides $(\Z_K:\Z[\al])$, then the following lemma allows the factorization of $p\Z_K$ into primes ideals of $\Z_K$ for $p=2,3$.  For every prime ideal factor $\p_{ijk}$, the residue degree $f_{ijk}$ is calculated. It
 plays a key role in  the proof of  Theorem \ref{npib}}.
\begin{lem}\label{fact2} 
{  
\begin{enumerate}
\item
 If $m\equiv 1 \md{16}$, then {$2\Z_K=\p_{111}\p_{121}\p_{131}^2\p_{211}\p_{221}\p_{231}^2$} is the
factorization into product of prime ideals of $\Z_K$, { $f_{111}=f_{121}=f_{131}=1$ and $f_{211}=f_{221}=f_{231}=2$}.
$2$.
\item
 If $m\equiv 9 \md{16}$, then {$2\Z_K=\p_{111}\p_{121}^2\p_{211}\p_{212}\p_{221}^2$} is the
factorization into product of prime ideals of $\Z_K$, with
$f_{111}=2$, $f_{121}=f_{211}=f_{212}=f_{221}=2$.
\item
 {If $m\equiv 5 \md{8}$, then $2\Z_K=\p_{111}^2\p_{211}^2\p_{212}^2$ is the
factorization into product of prime ideals of $\Z_K$, with
$f_{111}=f_{211}=f_{212}=2$.}
\item
 If $m\equiv 1 \md{9}$, then$3\Z_K=\p_{111}\p_{121}^2\p_{211}\p_{221}^2\p_{311}\p_{321}^2$, with residue degrees $f_{111}=f_{121}=f_{211}=f_{221}=1$ and $f_{311}=f_{321}=2$.
\item
 If $m\equiv -1 \md{9}$, then
 $3\Z_K=\p_{111}\p_{121}^2\p_{211}\p_{221}^2$ with  the same residue degree $2$ each factor.
\end{enumerate}}
\end{lem}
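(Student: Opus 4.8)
The plan is to prove each of the five statements of Lemma~\ref{fact2} by the same recipe: reduce $\overline{f(x)}$ modulo the prime $p\in\{2,3\}$, identify the irreducible factors $\overline{\ph_i}$, compute the $\ph_i$-expansions of $f$ (most of which already appear in the proof of Theorem~\ref{pib}), read off the principal $\ph_i$-Newton polygons under the stated congruence on $m$, check $\ph_i$-regularity by factoring the residual polynomials $f_{S_{ij}}(y)$ over $\F_{\ph_i}$, and then invoke Theorem~\ref{ore}(2) to obtain the prime ideal factorization together with the ramification indices $e_{ij}=l_{ij}/d_{ij}$ and residue degrees $f_{ijk}=\deg(\ph_i)\deg(\psi_{ijk})$.

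Concretely, for $p=2$ we have $\overline{f(x)}=\ph_1^4\ph_2^4$ with $\ph_1=x-1$, $\ph_2=x^2+x+1$, and the two $\ph_i$-expansions given in part (2) of the proof of Theorem~\ref{pib}. The constant terms are $1-m$ for $\ph_1$ and $1-m$ for $\ph_2$, so the shape of $\nph{f}$ is governed by $\nu_2(1-m)$ together with the $2$-adic valuations of the other coefficients $12,66,220,495,\dots$ (resp.\ $4x-4,\,-18x,\,24x+18,\dots$). When $m\equiv1\md{16}$ one gets $\nu_2(1-m)\ge4$, forcing a first side of slope $-1$ and length $3$ followed by a side of slope $<-1$; the length-$3$ side has $d=1$ so it contributes a single unramified prime of residue degree $\deg\ph_i$, while the short steep side contributes a ramified prime with $e=2$ — this is exactly the pattern $\p_{i11}\p_{i21}\p_{i31}^2$ with residue degrees $\deg\ph_i,\deg\ph_i,\deg\ph_i$ (i.e.\ $1,1,1$ for $i=1$ and $2,2,2$ for $i=2$). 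One must check the residual polynomial of the length-$3$ side factors into three distinct linear (resp.\ one distinct factor if the side has degree~$1$) factors over $\F_{\ph_i}$; since the congruence $m\equiv1\md{16}$ does not pin down $m$ modulo higher powers of $2$, a small argument is needed to see that the relevant residual polynomial is always separable. The cases $m\equiv9\md{16}$ and $m\equiv5\md8$ are handled the same way, with $\nu_2(1-m)$ equal to $3$ and $2$ respectively, yielding the shallower polygons and hence the factorizations stated in (2) and (3); here one should be careful that when $\nu_2(1-m)=2$ the whole polygon is a single side of slope $-1/2$, length $4$ (for $\ph_2$), degree $2$, whose residual polynomial must be checked to be separable of degree $2$ so that it stays irreducible or splits into the two primes $\p_{211}^2\p_{212}^2$.

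For $p=3$ the two sub-cases mirror the two cases in part (3) of the proof of Theorem~\ref{pib}: if $m\equiv1\md3$ then $\overline{f(x)}=\ph_1^3\ph_2^3\ph_3^3$ with $\ph_1=x-1,\ \ph_2=x+1,\ \ph_3=x^2+1$; if $m\equiv-1\md3$ then $\overline{f(x)}=\ph_1^3\ph_2^3$ with $\ph_1=x^2+x-1,\ \ph_2=x^2-x-1$. Using the $\ph_i$-expansions already recorded there, the condition $m\equiv\pm1\md9$ gives $\nu_3(1-m)\ge2$ (resp.\ $\nu_3(89-m)\ge2$), so each $\nph{f}$ splits as a side of slope $-1$, length $2$, degree $1$ (contributing one unramified prime of residue degree $\deg\ph_i$) plus a side of slope $<-1$, length $1$, degree $1$ (contributing one prime with $e=2$). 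That produces exactly $\p_{i11}\p_{i21}^2$ for each $i$, and then $f_{i11}=f_{i21}=\deg\ph_i$, which is $1,1,2$ for the three factors when $m\equiv1\md9$ (statement (4)) and $2,2$ when $m\equiv-1\md9$ (statement (5)). The main obstacle throughout is the regularity check: one must verify in every case that each residual polynomial $f_{S_{ij}}(y)$ is separable over the residue field $\F_{\ph_i}$ despite $m$ being fixed only modulo a bounded power of $p$. I expect this to reduce, in each case, to showing a short explicit polynomial of degree $\le3$ — with coefficients depending on $m\bmod p^k$ for the relevant small $k$ — has no repeated root, which is a finite check one can dispatch by considering the finitely many residues of $m$ allowed by the hypothesis; once regularity is in hand, Theorem~\ref{ore}(2) delivers all five factorizations and residue degrees directly.
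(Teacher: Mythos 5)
Your overall strategy is the one the paper uses ($\ph_i$-adic expansions, principal Newton polygons, residual polynomials, then Theorem \ref{ore}(2)), and your stated final factorizations agree with the lemma. But the polygon computations that are supposed to justify them are wrong in several places, and in at least one place the argument as you describe it cannot be completed. For $m\equiv 1\md{16}$ the polygon $N_{\ph_i}^+(f)$ has vertices $(0,v_2),(1,2),(2,1),(4,0)$ (coming from $\nu_2(12)=2$, $\nu_2(66)=1$, $\nu_2(495)=0$), i.e.\ \emph{three} sides of respective slopes $\le -2$, $-1$, $-1/2$, each of degree $1$ — not ``a side of slope $-1$ and length $3$ followed by a steep side.'' A side of slope $-1$ and length $3$ would have degree $\gcd(3,3)=3$, not $1$, and its residual cubic over $\F_{\ph_1}\simeq\F_2$ could never split into three distinct linear factors (there are only two elements), so your version of the polygon cannot produce the three distinct residue-degree-one primes above $\ph_1$ that statement (1) asserts. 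The correct three-sided polygon makes regularity automatic (every side has degree $1$) and gives exactly one prime per side.

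Two further concrete problems. First, you repeatedly attach $e=2$ to sides of length $1$ (``the short steep side contributes a ramified prime with $e=2$''; for $p=3$, ``a side of slope $<-1$, length $1$, \dots contributing one prime with $e=2$''). Since $e=l/d$ and $e$ divides the length $l$, a length-one side always gives $e=1$; the ramified factors come from the sides of slope $-1/2$ (the side $(2,1)$--$(4,0)$ for $p=2$, the side $(1,1)$--$(3,0)$ for $p=3$). Your final exponents come out right only because both primes in each pair happen to have the same residue degree, not because the reasoning is correct. Second, the decisive computation in cases (1)--(3) — whether the degree-two residual polynomial attached to the $\ph_2$-side splits over $\F_{\ph_2}\simeq\F_4$ (giving $\p_{211}^2\p_{212}^2$) or remains irreducible (giving a single prime of residue degree $4$, which would change the count in Theorem \ref{npib}) — is left as ``must be checked.'' The paper carries it out: $f_{S_{21}}(y)=(1-j)y^2+jy+1=(1-j)(y-1)(y-j)$, resp.\ $jy^2+(j-1)y+1=j(y-1)(y-j^2)$, with $j=\ol x$ mod $(2,x^2+x+1)$; the residual coefficients are completely determined by the stated congruence class of $m$ (e.g.\ $c_0=(1-m)/2^{v_2}\equiv 1$), so no extra separability argument over higher powers of $2$ is needed. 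Until those residual factorizations are actually computed, the lemma's conclusions are not established.
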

\begin{proof} 
  If $m\equiv 1 \md{4}$, then $f(x)\equiv
\ph_1^4\ph_2^4\md{2}$, where $\ph_1=x-1$ and $\ph_2=x^2+x+1$.
 Let $f(x+1)=x^{12}+\dots+4955x^4+220x^3+66x^2+12x+1-m$. Then $f(x)=\ph_1^{12}+\dots+495\ph_1^4+220\ph_1^3+66\ph_1^2+12\ph_1+1-m$.  Let also $f(x)=\ph_2^6+(-6x+9)\ph_2^5-(25+5x)\ph_2^4+(24x+18)\ph_2^3-18x\ph_2^2+(4x-4)\ph_2+(1-m)$  be the $\ph_2$-expansion of $f$. It follows that:
    \begin{enumerate}
\item
{ If   $m\equiv 5 \md{8}$,  then $\nu_2(1-m)=2$ and 
{${N}_{\ph_1}^+(f)=S_{11}$} has a single  side joining the points $(0,2)$ and $(4,0)$. Thus its ramification   degree is $d_{11}=2$ and  $f_{S_{11}}(y)=y^2+y+1$ is irreducible over $\F_{\ph_1}\simeq \F_2$. Hence $\ph_1$ provides a single prime ideal $\p_{111}$ of $\Z_K$ lying above $2$ with  residue degree $f_{111}=2$. More precisely,  $2\Z_K=\p_{111}^2I$, where $I$ is an ideal of $\Z_K$, its factorization will be determined according to the Newton polygon $N_{\ph_2}^+(f)$.
 Again, as    $m\equiv 5\md{8}$, $\nu_2(1-m)=2$ and 
{${N}_{\ph_2}^+(f)=S_{21}$} has a single  side joining the points $(0,2)$ and $(4,0)$. Thus its ramification   degree is $d_{21}=2$ and   $f_{S_{21}}(y)=(1-j)y^2+jy+1=(1-j)(y-1)(y-j)$ in $\F_{\ph_2}[y]$, where $j=\ol{x}$ modulo $(2,x^2+x+1)$. Thus,  $I=\p_{211}^2\p_{212}^2$ with the same residue $2$.}
 \item
 If   $m\equiv 9 \md{16}$,  then $\nu_2(1-m)= 3$ and 
{${N}_{\ph_i}^+(f)=S_{i1}+S_{i2}$}{has two sides joining the points $(0,3),\ (2,1)$, and $(4,0)$ (see $Figure\ 2$). Thus $d_{i1}=2$, $d_{i2}=1$}, $f_{S_{11}}(y)=y^2+y+1$ and  $f_{S_{12}}(y)$ is of degree $1$. Thus they are irreducible over $\F_{\ph_1}\simeq \F_2$. It follows that  $\ph_1$ provides two distinct prime ideals $\p_{111}$ and $\p_{121}$ of $\Z_K$ lying above $2$ with residue degrees respectively are $f_{111}=2$ and $f_{121}=1$. More precisely, 
$2\Z_K=\p_{111}\p_{121}^2I$ (because $e_{12}=\frac{l_{12}}{d_{12}}=2$). For the factorization of $I$, first
{  deg$(f_{S_{22}})=d_{22}=1$ implies that  $f_{S_{22}}$}  is irreducible over $\F_{\ph_2}$. On the other hand, since $f_{S_{21}}(y)=jy^2+(j-1)y+1=j(y-1)(y-j^2)$ in $\F_{\ph_2}[y]$, where $j=\ol{x}$ modulo the ideal $(2,x^2+x+1)$, we conclude that $\ph_2$ provides three distinct   prime ideals $\p_{211}$, $\p_{212}$, and $\p_{221}$ of $\Z_K$ lying above $2$ with the same  residue degree  $2$. Especially,  $I=\p_{211}\p_{212}\p_{221}^2$ (because $e_{22}=\frac{l_{22}}{d_{22}}=2$).
\begin{figure}[htbp] 
\centering
\begin{tikzpicture}[x=1cm,y=0.5cm]
\draw[latex-latex] (0,6) -- (0,0) -- (10,0) ;
\draw[thick] (0,0) -- (-0.5,0);
\draw[thick] (0,0) -- (0,-0.5);
\node at (0,0) [below left,blue]{\footnotesize $0$};
\node at (4.2,0) [below left,blue]{\footnotesize $4$};
\node at (0,3.5) [below left,blue]{\footnotesize $3$};
\node at (2.2,0) [below left,blue]{\footnotesize $2$};
\node at (0,1.5) [below left,blue]{\footnotesize $1$};

\draw[thick] plot coordinates{(0,3) (2,1) (4,0)};
\draw[thick, only marks, mark=x] plot coordinates{(0,3) (1,2) (2,1)  (4,0)};

\node at (1,2.7) [above  ,blue]{\footnotesize $S_{i1}$};
\node at (3,1) [above   ,blue]{\footnotesize $S_{i2}$};
\end{tikzpicture}
\caption{    \large  $N_{\ph_i}^+(f)$.}
\end{figure}
\item
{ If   $m\equiv 1 \md{16}$,  then $v_2=\nu_2(1-m)\ge 4$ and 
${N}_{\ph_i}^+(f)=S_{i1}+S_{i2}+S_{i3}$ has three sides joining the points $(0,v_2),\ (1,2),\ (2,1)$, and $(4,0)$ (see $Figure\ 3$). Thus all sides have the same ramification degree  $1$.}
It follows that  $\ph_i$ provides three distinct prime ideals $\p_{i11}$, $\p_{i21}$, and $\p_{i31}$ of $\Z_K$ lying above $2$ with the same residue degree  deg$(\ph_i)$ for every $i=1,2$. More precisely,  $2\Z_K=\p_{111}\p_{121}\p_{131}^2\p_{211}\p_{221}\p_{231}^2$ (because  $e_{i3}=\frac{l_{i3}}{d_{i3}}=2$).
\begin{figure}[htbp] 
\centering
\begin{tikzpicture}[x=1cm,y=0.5cm]
\draw[latex-latex] (0,6) -- (0,0) -- (10,0) ;
\draw[thick] (0,0) -- (-0.5,0);
\draw[thick] (0,0) -- (0,-0.5);
\node at (0,0) [below left,blue]{\footnotesize $0$};
\node at (4.2,0) [below left,blue]{\footnotesize $4$};
\node at (0,4.8) [below left,blue]{\footnotesize $v_2$};
\node at (2.2,0) [below left,blue]{\footnotesize $2$};
\node at (0,1.5) [below left,blue]{\footnotesize $1$};
\node at (0,2.5) [below left,blue]{\footnotesize $2$};
\node at (1.2,0) [below left,blue]{\footnotesize $1$};
\draw[thick] plot coordinates{(0,4.5) (1,2) (2,1) (4,0)};
\node at (1,2.7) [above  ,blue]{\footnotesize $S_{i1}$};
\node at (2,1.8) [above   ,blue]{\footnotesize $S_{i2}$};
\node at (3.5,0.5) [above   ,blue]{\footnotesize $S_{i3}$};
\end{tikzpicture}
\caption{    \large  $N_{\ph_i}^+(f)$.}
\end{figure}
\end{enumerate}
For  $m\equiv 1\md9$, 
$f(x)\equiv
((x-1)^3((x+1)^3(x^2+1))^3\md{3}$. Let $\ph_1=x-1$, $\ph_2=x+1$,  $\ph_3=x^2+1$,
$F(x)=f(x+1)=x^{12}+\dots+220x^3+66x^2+12x+1-m$, and
$G(x)=f(x-1)=x^{12}-\dots-220x^3+66x^2-12x+1-m$. Then  $f(x)=\ph_1^{12}+\dots+220\ph_1^3+66\ph_1^2+12\ph_1+1-m$, $f(x)=\ph_2^{12}-\dots-220\ph_2^3+66\ph_2^2-12\ph_2+1-m$, and $f(x)=\ph_3^{6}-6\ph_3^5+15\ph_3^4-20\ph_3^3+15\ph_3^2-6\ph_3+1-m$. { As $v_3=\nu_3(m-1)\ge 2$, then ${N}_{\ph_i}^+(f)=S_{i1}+S_{i2}$ has two sides joining the points $(0,v_3)$, $(1,1)$, and $(3,0)$ (see $Figure\ 4$)}.
Thus 
  $3\Z_K=\p_{111}\p_{121}^2\p_{211}\p_{221}^2\p_{311}\p_{321}^2$ such that $f_{111}=f_{121}=f_{211}=f_{221}=1$ and $f_{311}=f_{312}=2$.
\begin{figure}[htbp] 
\centering

\begin{tikzpicture}[x=1cm,y=0.5cm]
\draw[latex-latex] (0,6) -- (0,0) -- (10,0) ;

\draw[thick] (0,0) -- (-0.5,0);
\draw[thick] (0,0) -- (0,-0.5);

\node at (0,0) [below left,blue]{\footnotesize $0$};
\node at (3,0) [below left,blue]{\footnotesize $3$};
\node at (0,2.8) [below left,blue]{\footnotesize $v_3$};
\node at (0,1.4) [below left,blue]{\footnotesize $1$};
\node at (1.2,0) [below left,blue]{\footnotesize $1$};

\draw[thick] plot coordinates{(0,2.5) (1,1)  (3,0)};

\node at (0.7,2.2) [above  ,blue]{\footnotesize $S_{i1}$};
\node at (2,1) [above   ,blue]{\footnotesize $S_{i2}$};
\end{tikzpicture}
\caption{    \large  $N_{\ph_i}^+(f)$.}
\end{figure}
\smallskip

 Similarly, for   $m\equiv -1\md9$, 
 $\overline{f(x)}=\ph_1^3\ph_2^3$ in $\F_3[x]$, where $\ph_1=x^2+x-1$ and $\ph_2=x^2-x-1$.
Let{$f(x)=\ph_1^6+(-6x+21)\ph_1^5+(-65x+125)\ph_1^4+(-256x+338)\ph_1^3+(-474x+468)\ph_1^2+(-420x+324)\ph_1+(-144x+89-m)$} be the $\ph_1$-expansion of $f$ and
$f(x)=\ph_2^6+(6x+21)\ph_2^5+(65x+125)\ph_2^4+(256x+338)\ph_2^3+(474x+468)\ph_2^2+(420x+324)\ph_2+(144x+89-m)$ be the $\ph_2$-expansion of $f$. If $m\equiv -1\md9$, then {$v_3=\nu_3(\mp144x+89-m)= 2$, and so $N_{\ph_i}^+(f)=S_{i1}+S_{i2}$ has two sides joining the points $(0,2)$, $(1,1)$, and $(3,0)$ (see $Figure\ 5$)}.  Thus $3\Z_K=\p_{111}\p_{121}^2\p_{211}\p_{212}^2$ with  residue degree $2$ for every prime factor $\p_{ijk}$.
\begin{figure}[htbp] 
\centering

\begin{tikzpicture}[x=1cm,y=0.5cm]
\draw[latex-latex] (0,6) -- (0,0) -- (10,0) ;

\draw[thick] (0,0) -- (-0.5,0);
\draw[thick] (0,0) -- (0,-0.5);

\node at (0,0) [below left,blue]{\footnotesize $0$};
\node at (3.2,0) [below left,blue]{\footnotesize $3$};
\node at (0,2.6) [below left,blue]{\footnotesize $2$};
\node at (0,1.6) [below left,blue]{\footnotesize $1$};
\node at (1.2,0) [below left,blue]{\footnotesize $1$};

\draw[thick] plot coordinates{(0,2.2) (1,1)  (3,0)};

\node at (0.7,2.2) [above  ,blue]{\footnotesize $S_{i1}$};
\node at (2,1) [above   ,blue]{\footnotesize $S_{i2}$};
\end{tikzpicture}
\caption{    \large  $N_{\ph_i}^+(f)$.}
\end{figure}
\end{proof}
 { 
Based on prime ideal factorization, the following lemma gives a sufficient condition for  the non monogeneity of $K$. Its proof is an immediate consequence of {Dedekind's} theorem. 
\begin{lem} \label{comindex}
 Let  $p$ be  rational prime integer and $K$  a number field. For every positive integer $f$, let $P_f$ be the number of distinct prime ideals of $\Z_K$ lying above $p$ with residue degree $f$ and $N_f$  the number of monic irreducible polynomials of  $\F_p[x]$ of degree $f$.
{ If $ P_f > N_f$ for some
positive integer $f$}, then for every generator $\th\in \Z_K$ of $K$, $p$ divide the index  $(\Z_K:\Z[\th])$.
\end{lem}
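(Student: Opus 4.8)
The plan is to argue by contradiction, using the theorem of Dedekind recalled in Section~3 which describes the splitting of $p$ in $\Z[\th]$ when $p$ misses the index. Suppose $\th\in\Z_K$ generates $K$ and that $p$ does \emph{not} divide $(\Z_K:\Z[\th])$. Let $g(x)\in\Z[x]$ be the minimal polynomial of $\th$ over $\Q$; since $\th$ generates $K$ we have $\deg g=n=[K:\Q]$, and since $\th$ is an algebraic integer $g$ is monic with integer coefficients. Write
$$\overline{g}(x)=\prod_{i=1}^{s}\overline{\ph_i}(x)^{l_i}$$
for the factorization of $\overline{g}$ into powers of pairwise coprime monic irreducible polynomials of $\F_p[x]$, with each $\ph_i\in\Z[x]$ monic and $\overline{\ph_i}$ irreducible.

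Next I would invoke Dedekind's theorem: because $p\nmid(\Z_K:\Z[\th])$, one has
$$p\Z_K=\prod_{i=1}^{s}\p_i^{l_i},\qquad \p_i=p\Z_K+\ph_i(\th)\Z_K,\qquad f(\p_i)=\deg(\ph_i).$$
Hence the prime ideals of $\Z_K$ lying above $p$ are exactly $\p_1,\dots,\p_s$, they are pairwise distinct (the $\overline{\ph_i}$ being coprime), and $\p_i\mapsto\overline{\ph_i}$ is a bijection from $\{\p_1,\dots,\p_s\}$ onto the set of distinct monic irreducible factors of $\overline{g}$ in $\F_p[x]$ which preserves degrees, since $f(\p_i)=\deg(\overline{\ph_i})$.

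Now fix a positive integer $f$ with $P_f>N_f$. By the previous step, the $P_f$ prime ideals of $\Z_K$ above $p$ of residue degree $f$ correspond under $\p_i\mapsto\overline{\ph_i}$ to $P_f$ distinct monic irreducible polynomials of $\F_p[x]$ of degree $f$; therefore $P_f\le N_f$, contradicting $P_f>N_f$. Consequently no generator $\th$ of $K$ can satisfy $p\nmid(\Z_K:\Z[\th])$, i.e.\ $p$ divides $(\Z_K:\Z[\th])$ for every such $\th$. There is no genuine obstacle here; the one point to keep in mind is conceptual rather than computational, namely that the decomposition type of $p\Z_K$ is an intrinsic invariant of $K$ whereas the factorization of $\overline{g}$ depends on the choice of $\th$, and the two coincide precisely in the situation forced by the contradiction hypothesis.
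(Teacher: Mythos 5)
Your proof is correct and is exactly the argument the paper intends: the paper merely remarks that the lemma is ``an immediate consequence of Dedekind's theorem,'' and you have supplied precisely that deduction, namely that if $p\nmid(\Z_K:\Z[\th])$ then Dedekind's theorem gives an injective, degree-preserving correspondence from the primes above $p$ to the distinct monic irreducible factors of $\overline{g}$ in $\F_p[x]$, forcing $P_f\le N_f$ for every $f$. Nothing further is needed.
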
}
\begin{proof} of Theorem \ref{npib}. { In every case, for an adequate prime integer $p$, we will show that $p$ divides the index  $(\Z_K:\Z[\th])$ for every generator $\th\in \Z_K$ of $K$, and so
 $K$ is not monogenic.}
 \begin{enumerate}
\item
 Assume that  $m\equiv 1 \md{8}$. By lemma \ref{fact2},  there are  at least { three } distinct prime ideals of $\Z_K$ lying above $2$, with residue degree $2$ each one. As there is only a unique monic irreducible polynomial of degree $2$ in $\F_2[x]$, namely $x^2+x+1$. Thus, by Lemma \ref{comindex}, $2$ divide the index  $(\Z_K:\Z[\th])$ for every generator $\th\in \Z_K$ of $K$. Hence $K$ is not monogenic.
  \item
  { Similarly, if $m\equiv 5 \md{8}$, then by Lemma \ref{fact2}, there are exactly { three } distinct prime ideals of $\Z_K$ lying above $2$, with residue degree $2$. As there is only a unique monic irreducible polynomial of degree $2$ in $\F_2[x]$, for every  $\th\in
\Z_K$,  $2$  divides the index $(\Z_K:\Z[\th])$. }
    \item
 If $m\equiv  1 \md{9}$, then by Lemma \ref{fact2}, there are  
 four distinct prime ideals of $\Z_K$ lying above $3$,  with residue degree $1$ each one. As there is only three monic irreducible polynomial of degree $1$ in $\F_3[x]$, by Lemma \ref{comindex},   $3$ divide the index  $(\Z_K:\Z[\th])$ for every generator $\th\in \Z_K$ of $K$. Hence $K$ is not monogenic.
 \item
  Similarly, if $m\equiv  -1 \md{9}$, by Lemma \ref{fact2}, there are 
 four distinct prime ideals of $\Z_K$ lying above $3$, with residue degree $2$ each one. As there is only three monic irreducible polynomial of degree $2$ in $\F_3[x]$, namely, $x^2+1$, $x^2+x-1$, and $x^2-x-1$, by Lemma \ref{comindex},   $3$ divide the index  $(\Z_K:\Z[\th])$ for every generator $\th\in \Z_K$ of $K$. Hence $K$ is not monogenic.
\end{enumerate}
\end{proof}
{\section*{Acknowledgements}
{The author is deeply  grateful to the anonymous referee whose valuable comments and suggestions have tremendously improved the quality of this paper. As well as for Professor Enric Nart who introduced him to Newton polygon techniques}.}
 
  \end{document}